\newcommand*{\shifttext}[2]{%
  \settowidth{\@tempdima}{#2}%
  \makebox[\@tempdima]{\hspace*{#1}#2}%
}
\renewcommand*\env@matrix[1][\arraystretch]{%
  \edef\arraystretch{#1}%
  \hskip -\arraycolsep
  \let\@ifnextchar\new@ifnextchar
  \array{*\c@MaxMatrixCols c}}
\newcommand\reallywidehat[1]{%
\savestack{\tmpbox}{\stretchto{%
  \scaleto{%
    \scalerel*[\widthof{\ensuremath{#1}}]{\kern.1pt\mathchar"0362\kern.1pt}%
    {\rule{0ex}{\textheight}}
  }{\textheight}%
}{2.4ex}}%
\stackon[-6.9pt]{#1}{\tmpbox}%
}
    \edef\sign{\pgfmathresult}%
    \edef\x{\pgfmathresult}%
    \edef\t{\pgfmathresult}%
    \edef\y{\pgfmathresult}%
\theoremstyle{theorem}
\newtheorem{theorem}[equation]{Theorem}
\newtheorem{lemma}[equation]{Lemma}
\newtheorem{proposition}[equation]{Proposition}
\newtheorem{corollary}[equation]{Corollary}
\theoremstyle{definition}
\newtheorem{definition}[equation]{Definition}
\newtheorem{construction}[equation]{Construction}
\newtheorem{question}[equation]{Question}
\newtheorem{problem}[equation]{Problem}
\newtheorem{example}[equation]{Example}
\newtheorem{exercise}[equation]{Exercise}
\newtheorem*{answer}{Answer}
\newtheorem*{solution}{Solution}
\newtheorem{remark}[equation]{Remark}
\newtheorem{notation}[equation]{Notation}
\newtheorem{noterm}[equation]{Notation and Terminology}
\newcommand\define[1]{\emph{\textbf{#1}}}
\numberwithin{equation}{section}
 \let\t=\tau
\newcommand{\be}{\begin{equation}}
\newcommand{\ee}{\end{equation}}
\def\ba{\begin{align}} 
\def\ea{\end{align}}
\newcommand{\bea}{\begin{eqnarray}}
\newcommand{\eea}{\end{eqnarray}}
\newcommand{\bx}{\begin{example}}
\newcommand{\ex}{\end{example}}
\newcommand{\bex}{\begin{exercise}}
\newcommand{\eex}{\end{exercise}}
\newcommand{\ban}{\begin{answer}}
\newcommand{\ean}{\end{answer}}
\newcommand{\bt}{\begin{theorem}}
\newcommand{\et}{\end{theorem}}
\newcommand{\bc}{\begin{corollary}}
\newcommand{\ec}{\end{corollary}}
\newcommand{\blem}{\begin{lemma}}
\newcommand{\elem}{\end{lemma}}
\newcommand{\bp}{\begin{problem}}
\newcommand{\ep}{\end{problem}}
\newcommand{\bn}{\begin{proposition}}
\newcommand{\en}{\end{proposition}}
\newcommand{\bd}{\begin{definition}}
\newcommand{\ed}{\end{definition}}
\newcommand{\bcon}{\begin{construction}}
\newcommand{\econ}{\end{construction}}
\newcommand{\bq}{\begin{question}}
\newcommand{\eq}{\end{question}}
\newcommand{\bprf}{\begin{proof}}
\newcommand{\eprf}{\end{proof}}
\newcommand{\br}{\begin{remark}}
\newcommand{\er}{\end{remark}}
\newcommand{\bs}{\begin{solution}}
\newcommand{\es}{\end{solution}}
\newcommand{\beqs}{\begin{eqnarray}}
\newcommand{\eeqs}{\end{eqnarray}}
\newcommand{\bnt}{\begin{noterm}}
\newcommand{\ent}{\end{noterm}}
\newcommand{\bnot}{\begin{notation}}
\newcommand{\enot}{\end{notation}}
\def\R{{{\mathbb R}}}
\def\N{{{\mathbb N}}}
\def\Z{{{\mathbb Z}}}
\def\B{{{\mathbb B}}}
\def\bE{{{\mathbb E}}}
\newcommand{\stoch}{\;\xy0;/r.25pc/:(-3,0)*{}="1";(3,0)*{}="2";{\ar@{~>}"1";"2"|(1.06){\hole}};\endxy\!}
\newcounter{sarrow}
\newcounter{sqarrow}
\newcommand{\ben}{\renewcommand{\theenumi}{\alph{enumi}} 
\renewcommand{\labelenumi}{(\theenumi)}\begin{enumerate}}
\newcommand{\een}{\end{enumerate}}
\newlength\stateheight
\newlength\minimumstatewidth
\tikzset{width/.initial=\minimummorphismwidth}
\tikzset{colour/.initial=white}
\newif\ifblack\pgfkeys{/tikz/black/.is if=black}
\newif\ifwedge\pgfkeys{/tikz/wedge/.is if=wedge}
\newif\ifvflip\pgfkeys{/tikz/vflip/.is if=vflip}
\newif\ifhflip\pgfkeys{/tikz/hflip/.is if=hflip}
\newif\ifhvflip\pgfkeys{/tikz/hvflip/.is if=hvflip}
\def\thickness{0.4pt}
    \gdef\node@@on@layer{%
      \setbox\tikz@tempbox=\hbox\bgroup\pgfonlayer{#1}\unhbox\tikz@tempbox\endpgfonlayer\pgfsetlinewidth{\thickness}\egroup}
\def\node@on@layer{\aftergroup\node@@on@layer}
    \pgfmathsetlength\pgf@xa{.5\pgf@xa+.5\pgf@xb}
    \pgfmathsetlength\pgf@yc{.16666\pgf@yb-.16666\pgf@ya}
\tikzset{inline text/.style =
  {text height=1.2ex,text depth=0.25ex,yshift=0.5mm}}
\tikzset{arrow box/.style =
  {rectangle,inline text,fill=white,draw,
    minimum height=5mm,yshift=-0.5mm,minimum width=5mm}}
\tikzset{bubble/.style =
  {inner sep=0mm,minimum width=3mm,minimum height=3mm,
    draw,shape=circle,fill=white}}
\tikzset{dot/.style =
  {inner sep=0mm,minimum width=1mm,minimum height=1mm,
    draw,shape=circle}}
\tikzset{white dot/.style = {dot,fill=white,text depth=-0.2mm}}
\tikzset{scalar/.style = {diamond,draw,inner sep=1pt}}
\tikzset{square/.style =
  {inner sep=0mm,minimum width=2mm,minimum height=2mm,
    draw,shape=rectangle}}
\tikzset{star/.style = {dot,fill=white,text depth=-0.2mm}}
\tikzset{copier/.style = {dot,fill,text depth=-0.2mm}}
\tikzset{fakecopier/.style = {square,fill,text depth=-0.2mm}}
\tikzset{discarder/.style = {my ground,draw,inner sep=0pt,
    minimum width=4.2pt,minimum height=11.2pt,anchor=input,rotate=90}}
\tikzset{xshiftu/.style = {shift = {(#1, 0)}}}
\tikzset{yshiftu/.style = {shift = {(0, #1)}}}
\tikzset{scriptstyle/.style={font=\everymath\expandafter{\the\everymath\scriptstyle}}}
\begin{document}

\begin{center}{\Large On analytic groupoid cardinality}\\
James Fullwood \end{center}

\begin{abstract}
Groupoids graded by the groupoid of bijections between finite sets admit generating functions which encode the groupoid cardinalities of their graded components. As suggested in the work of Baez and Dolan, we use analytic continuation of such generating functions to define a complex-valued cardinality for groupoids whose usual groupoid cardinality diverges. The complex nature of such a cardinality invariant is shown to reflect a recursion of structure which we refer to as `nested equivalence'. 
\end{abstract}

\tableofcontents

\section{Introduction}

Groupoid cardinality was introduced by Baez and Dolan as a way of catgegorifying division of natural numbers \cite{BAEZDOLAN}. In particular, if a finite group $G$ acts freely on a finite set $X$, then the quotient $X/G$ satisfies $\#\left(X/G\right)=\#X/\#G$, but what if the action of $G$ on $X$ is not free? In such a case, one may consider the action groupoid $X/\!\!/ G$, whose objects are the elements of $X$, together with a morphism $g:x_1\to x_2$ whenever $gx_1=x_2$ for some $g\in G$. The groupoid cardinality of $X/\!\!/ G$ is then the rational number $\#X/\#G$, and is a measure of the size of a discrete groupoid which extends the notion of set cardinality when viewing a finite set as a groupoid with only identity morphisms. 

To define groupoid cardinality, suppose $\mathcal{G}$ is a groupoid with countable isomorphism classes such that $\#\text{Aut}(x)$ is finite for all $x\in \mathcal{G}$. Then the \emph{groupoid cardinality} of $\mathcal{G}$ is the element $\chi(\mathcal{G})\in [0,\infty]$ given by
\be\label{GCDEF}
\chi(\mathcal{G})=\sum_{[x]\in [\mathcal{G}]} \frac{1}{\# \text{Aut}(x)},
\ee
where $[\mathcal{G}]$ denotes the set of isomorphism classes of $\mathcal{G}$ and $[x]$ denotes the isomorphism class of the object $x\in \mathcal{G}$. Groupoid cardinality is additive on disjoint unions, multiplicative on products and satisfies other properties one would expect from a cardinality measure. The definition of groupoid cardinality seems to have first appeared in the context of Behrend's trace formula for the Frobenius automorphism on algebraic stacks \cite{BEHRENDTRACE}, and in the case that $\mathcal{G}$ has finitely many objects, $\chi(\mathcal{G})$ coincides with Leinster's definition of Euler characteristic for finite catgeories \cite{LEINSTERECC}. A generalization of groupoid cardinality to Lie groupoids was constructed by Weinstein in \cite{WEINSTEINSTACK}, which may be viewed as the volume of the differentiable stack associated with a Lie groupoid. An equation similar to \eqref{GCDEF} also appears in the definition of the Minkowski-Siegel mass formula for the weights of quadratic forms in a given genus.

In Section~\ref{GCC17}, we prove a characterization theorem for groupoid cardinality in terms of its invariance with respect to categorical equivalence, its additivity over disjoint unions, a continuity property and its behavior with respect to $k$-sheeted coverings. After doing so, the direction of the paper is motivated by the following quote from Baez, Hoffnung and Walker \cite{BAEZGFC}:
\\

``Getting a useful generalization of groupoids for which the cardinality is naturally complex, without putting in the complex numbers `by hand', remains an elusive goal.''
\\

With such a goal in mind, we turn to stuff types in Section~\ref{SEC3}, which were also introduced by Baez and Dolan in \cite{BAEZDOLAN}. Stuff types are groupoids graded by the groupoid $\mathfrak{Fin}$ of bijections between finite sets, and as such, are essentially generalized combinatorial species with the arrows reversed \cite{JOYAL}. Given a stuff type $\mathcal{G}\to \mathfrak{Fin}$, one may associate with $\mathcal{G}$ a formal power series
\[
\mathcal{G}(z)=a_0+a_1z+a_2z^2+\cdots \in \R[[z]]
\]
where $a_n$ is the groupoid cardinality of the full inverse image of an $n$-element set with respect to the functor $\mathcal{G}\to \mathfrak{Fin}$. If it turns out that $\mathcal{G}(z)$ is analytic in a neighborhood of $z=0$ and admits a unique analytic continuation to $z=1$, then we define the \emph{analytic groupoid cardinality} of $\mathcal{G}$ to be the complex number $\chi_a(\mathcal{G})$ obtained via the analytic continuation of $\mathcal{G}(z)$ to $z=1$. Analytic groupoid cardinality then yields a complex value for stuff types whose usual groupoid cardinality diverges. While a construction of `analytically continued cardinalities' for stuff types was first suggested by Baez and Dolan in  \cite{BAEZDOLAN}, to the best of our knowledge a precise statement has yet to be formulated in the literature. In Section~\ref{ATG} we show analytic groupoid cardinality is a measure which restricts to groupoid cardinality for all stuff types which have finite groupoid cardinality, and that it satisfies other properties one expects of a cardinality measure, such as being additive over disjoint unions and multiplicative over products. 

The way in which analytic groupoid cardinality makes sense of a divergent cardinality via analytic continuation is similar in spirit to Berger and Leinster's definition of \emph{series Euler characteristic} for finite categories \cite{LEINSTERECDIV}. For this, the nerve of a category is used to associate with a finite category $A$ a simplicial set $S_A$, with which one may associate the formal power series
\[
A(z)=c_0+c_1z+c_2z^2+\cdots \in \R[[z]]
\]
where $c_i$ is the number of $i$-simplices in $S_A$. The series Euler characteristic $\chi(A)$ is then taken to be the analytic continuation of $A(z)$ to $z=-1$ (which turns out to be well-defined for all $A$). But while such a construction invokes analytic continuation, Berger and Leinster show that the analytic contiuation of $A(z)$ to $z=-1$ may be given by a rational function, thus complex numbers never enter the picture in this context. 

The purely formal nature of the definition of analytic groupoid cardinality along with its departure from the real domain yields little insight as to what analytic groupoid cardinality actually means, which is a topic we address in Section~\ref{AGCNE}. In particular, we define a notion of structural recursion for stuff types which we refer to as \emph{nested equivalence}, and show that if a stuff type admits a nested equivalence, then its analytic groupoid cardinality is necessarily a fixed point of a polynomial associated with the equivalence which we refer to as the \emph{structural polynomial}. As such, the construction of analytic groupoid cardinality is reminiscent of viewing objects of categories which satisfy an isomorphism of the form $P(X)\cong X$ with $P$ a polynomial as categorified complex numbers. Such perspectives go back to the papers \cite{RGates} of R. Gates, \cite{BLASS} of Blass and \cite{LEINSTEROCC} of Fiore and Leinster, all of which -- as well as the present work -- were influenced by the following quote of Lawvere \cite{La91}:
\\

``I was surprised to note that an isomorphism $x=1+x^2$ (leading to complex numbers as Euler characteristics if they don't collapse) always induces an isomorphism $x^7=x$.''

\section{A characterization of groupoid cardinality}\label{GCC17}
Let $\mathcal{G}$ be a groupoid, and let $[\mathcal{G}]$ denote the set of isomorphism classes of $\mathcal{G}$. Unless stated otherwise, we assume $[\mathcal{G}]$ is countable and that $\text{Aut}(x)$ is finite for every object $x$ in $\mathcal{G}$.  

\bd
The \define{groupoid cardinality} $\chi(\mathcal{G})$ of $\mathcal{G}$ is given by
\[
\chi(\mathcal{G})=\sum_{[x]\in [\mathcal{G}]} \frac{1}{\# \text{Aut}(x)} \hspace{0.5mm} .
\]
The groupoid $\mathcal{G}$ will be referred to as \define{tame} if and only if $\chi(\mathcal{G})<\infty$. 
\ed

\bx
Let $\mathfrak{Fin}$ be the groupoid of bijections between finite sets. Then $\mathfrak{Fin}$ is tame, since
\[
\chi\left(\mathfrak{Fin}\right)=\sum_{n=0}^{\infty}\frac{1}{n!}=e
\]
\ex

\bx
Given a finite group $G$, $\B G$ is the groupoid with one object whose morphisms consist of the elements of $G$. It then follows that $\chi(\B G)=\frac{1}{\# G}$.
\ex

\bd
Let $\mathcal{G}$ be a tame groupoid, suppose $f:[\mathcal{G}]\to \N$ is a surjection, and let $\mathcal{G}_n=f^{-1}(n)$ and $\mathcal{G}^{(n)}=\coprod_{i=0}^n\mathcal{G}_i$ for all $n\in \N$. Then the sequence
\[
\mathcal{G}^{(0)}\hookrightarrow \mathcal{G}^{(1)}\hookrightarrow \cdots \hookrightarrow \mathcal{G}^{(n)}\hookrightarrow \cdots
\]
is said to be a \define{filtration} of $\mathcal{G}$.
\ed

\bd\label{PLPD}
Let $F:\mathcal{G}\to \mathcal{H}$ be a functor between groupoids. Then $F$ is said to be a $k$-\define{sheeted covering} if and only if $F$ satisfies the following properties.
\begin{enumerate}[i.]
\item
$F$ is surjective on objects.
\item\label{plp}
Given a morphism $h_1\to h_2$ in $\mathcal{H}$ and an object $g_1$ in $\mathcal{G}$ such that $F(g_1)=h_1$, there exists a unique morphism $g_1\to g_2$ in $\mathcal{G}$ such that $F(g_1\to g_2)=h_1\to h_2$.
\item
The preimage of every object in $\mathcal{H}$ consists of $k$ objects in $\mathcal{G}$.
\end{enumerate} 
\ed

\bn\label{T1}
Groupoid cardinality satisfies the following properties.
\begin{enumerate}[i.]
\item\label{GC1}
If $\mathcal{G}$ is equivalent to  $\mathcal{H}$, then $\chi(\mathcal{G})=\chi(\mathcal{H})$.
\item\label{GC2}
$\chi(\mathcal{G}\sqcup \mathcal{H})=\chi(\mathcal{G})+\chi(\mathcal{H})$ \hspace{2mm} for all tame groupoids $\mathcal{G}$ and $\mathcal{H}$.
\item\label{GC3}
$\chi(\mathcal{G}\times \mathcal{H})=\chi(\mathcal{G})\cdot \chi(\mathcal{H})$ \hspace{3.5mm} for all tame groupoids $\mathcal{G}$ and $\mathcal{H}$.
\item\label{GC4}
If $\mathcal{G}^{(n)}$ is a filtration of $\mathcal{G}$, then $\displaystyle \lim_{n\to \infty}\chi(\mathcal{G}^{(n)})=\chi(\mathcal{G})$.
\item\label{GC5}
If $F:\mathcal{G}\to \mathcal{H}$ is a $k$-sheeted covering between tame groupoids, then $\chi(\mathcal{G})=k\chi(\mathcal{H})$.
\item\label{GC6}
If  $\bullet$ is a groupoid with one object and one morphism, then $\chi(\bullet)=1$.
\item\label{GC7}
If $X$ is a finite set and $G$ is a finite group acting on $X$, then $\chi(X/\!\!/ G)=\frac{\#(X)}{\#(G)}$, where $X/\!\!/ G$ is the associated action groupoid.
\end{enumerate}
\en

\bprf
{\color{white}{you found me!}}
\begin{enumerate}[i.]
\item
Since groupoid cardinality of a groupoid $\mathcal{G}$ is defined in terms of a representative of each isomorphism class of $\mathcal{G}$, the statement follows.
\item
This is proved in Theorem~4 of \cite{MORTON}.
\item
This is proved in Theorem~4 of \cite{MORTON}.
\item
Let $\mathcal{G}$ be a groupoid, and suppose $\mathcal{G}^{(n)}=\coprod_{i=0}^n\mathcal{G}_i$ is a filtration of $\mathcal{G}$. Then
\[
\chi(\mathcal{G})=\sum_{[x]\in [\mathcal{G}]} \frac{1}{\# \text{Aut}(x)}=\sum_{n=0}^{\infty}\chi(\mathcal{G}_n)=\lim_{n\to \infty}\left(\sum_{i=0}^n\chi(\mathcal{G}_i)\right)=\lim_{n\to \infty}\chi(\mathcal{G}^{(n)}),
\]
as desired.
\item
Suppose $F:\mathcal{G}\to \mathcal{H}$ is a $k$-sheeted covering, let $h$ be an object in $\mathcal{H}$, and suppose the $k$ objects in $\mathcal{G}$ which map to $h$ under $F$ belong to $m$ different isomorphism classes, so that we can index the objects which map to $h$ as follows:
\[
g_{11},g_{12},...,g_{1k_1}\in [g_{11}], \quad g_{21},g_{22},...,g_{2k_2}\in [g_{21}], \quad \cdots \quad g_{m1},g_{m2},...,g_{mk_m}\in [g_{m1}]
\]
Now given $i\in \{1,...,m\}$, the path lifting property \ref{plp} of Definition~\ref{PLPD} implies that as a set $\text{Aut}(h)$ is in bijective correspondence with the union
\[
\text{Aut}(g_{i1})\cup\text{Hom}(g_{i1},g_{i2})\cup \cdots \cup \text{Hom}(g_{i1},g_{ik_i})
\]
It then follows that $\#\text{Aut}(g_{i1})\cdot k_i=\#\text{Aut}(h)$ for all $i\in \{1,...,m\}$, and since $k_1+\cdots+k_m=k$, we have
\begin{eqnarray*}
\frac{k}{\#\text{Aut}(h)}=\frac{k_1+\cdots+k_m}{\#\text{Aut}(h)}&=&\frac{k_1}{\#\text{Aut}(h)}+\cdots+\frac{k_m}{\#\text{Aut}(h)} \\
&=&\frac{k_1}{k_1\#\text{Aut}(g_{11})}+\cdots+\frac{k_m}{k_m\#\text{Aut}(g_{m1})} \\
&=&\frac{1}{\#\text{Aut}(g_{11})}+\cdots+\frac{1}{\#\text{Aut}(g_{m1})}, \\
&=&\chi\left(F^{-1}(h)\right),
\end{eqnarray*}
thus 
\[
k\chi(\mathcal{H})=\sum_{[h]\in [\mathcal{H}]}\frac{k}{\#\text{Aut}(h)}=\sum_{[h]\in [\mathcal{H}]}\chi\left(F^{-1}(h)\right)=\chi(\mathcal{G}),
\]
as desired.
\item
The statement follows directly from the definition of groupoid cardinality.
\item
This is proved in Theorem~6 of \cite{MORTON}.
\qedhere
\end{enumerate}
\eprf

\bd
The \define{category of tame groupoids} is the category $\mathfrak{Gpd}_{\mathbb{T}}$ consisting of functors between tame groupoids.
\ed

We now prove a characterization theorem for groupoid cardinality. While a similar characterization for essentially finite groupoids seems to be well-known among experts (though absent from the literature), our characterization holds on the full category of tame groupoids.  

\begin{theorem}\label{ugc}
Suppose $\mu:\emph{Ob}(\mathfrak{Gpd}_{\mathbb{T}})\to \R$ is a function satisfying the following conditions.
\begin{enumerate}[i.]
\item\label{G1}
If $\mathcal{G}$ is equivalent to  $\mathcal{H}$, then $\mu(\mathcal{G})=\mu(\mathcal{H})$.
\item\label{G2}
If  $\bullet$ denotes a groupoid with one object and one morphism, then $\mu(\bullet)=1$.
\item\label{G3}
If $F:\mathcal{G}\to \mathcal{H}$ is a $k$-sheeted covering, then $\mu(\mathcal{G})=k\mu(\mathcal{H})$.
\item\label{G4}
$\mu(\mathcal{G}\sqcup \mathcal{H})=\mu(\mathcal{G})+\mu(\mathcal{H})$ \hspace{2mm} for all tame groupoids $\mathcal{G}$ and $\mathcal{H}$.
\item\label{G5}
If $\mathcal{G}^{(n)}$ is a filtration of $\mathcal{G}$, then $\displaystyle \lim_{n\to \infty}\mu(\mathcal{G}^{(n)})=\mu(\mathcal{G})$.
\end{enumerate}
Then $\mu=\chi$, the groupoid cardinality.
\end{theorem}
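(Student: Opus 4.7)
The plan is to force $\mu$ to agree with $\chi$ by first pinning down its value on $\B G$ for finite groups $G$, extending by equivalence invariance to all connected tame groupoids, and then assembling an arbitrary tame groupoid from its connected components using additivity together with the continuity axiom.

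To compute $\mu(\B G)$ I would exhibit the translation groupoid $EG$, whose objects are the elements of $G$ with a unique morphism between any two. The evident functor $EG\to \B G$ sending the unique arrow $g_1\to g_2$ to $g_2g_1^{-1}\in G$ is a $\#G$-sheeted covering in the sense of Definition~\ref{PLPD}: it is surjective on objects, every morphism $g$ in $\B G$ lifts uniquely to the arrow $g_1\to gg_1$ out of any chosen source $g_1$, and the fiber over the unique object of $\B G$ has $\#G$ elements. Since $EG$ is equivalent to $\bullet$ (its objects are all isomorphic with trivial automorphism group), axioms \ref{G1} and \ref{G2} give $\mu(EG)=1$, whence \ref{G3} forces $\mu(\B G)=1/\#G$. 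Axiom \ref{G1} then propagates this value to every connected tame groupoid $\mathcal{C}$, since such a $\mathcal{C}$ is equivalent to $\B(\text{Aut}(x))$ for any $x\in \mathcal{C}$, giving $\mu(\mathcal{C})=1/\#\text{Aut}(x)=\chi(\mathcal{C})$.

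Next, let $\mathcal{G}$ be an arbitrary tame groupoid and let $\mathcal{C}_{[x]}$ denote the connected component of $\mathcal{G}$ at $x$, so that $\mathcal{G}\simeq \coprod_{[x]\in[\mathcal{G}]}\mathcal{C}_{[x]}$. When $[\mathcal{G}]$ is finite, iterating axiom \ref{G4} immediately yields $\mu(\mathcal{G})=\sum_{[x]}1/\#\text{Aut}(x)=\chi(\mathcal{G})$. When $[\mathcal{G}]$ is countably infinite, I would pick a bijection $f:[\mathcal{G}]\to \N$ and set $\mathcal{G}_n=f^{-1}(n)$ and $\mathcal{G}^{(n)}=\coprod_{i=0}^n \mathcal{G}_i$; each $\mathcal{G}^{(n)}$ has finitely many isomorphism classes, so the finite case gives $\mu(\mathcal{G}^{(n)})=\chi(\mathcal{G}^{(n)})$, and axiom \ref{G5} together with Proposition~\ref{T1}\ref{GC4} delivers
\[
\mu(\mathcal{G})=\lim_{n\to\infty}\mu(\mathcal{G}^{(n)})=\lim_{n\to\infty}\chi(\mathcal{G}^{(n)})=\chi(\mathcal{G}).
\]

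The main obstacle is carefully verifying that $EG\to \B G$ satisfies the unique path-lifting condition of Definition~\ref{PLPD}, since this is the single axiom that pins down the normalization $\mu(\B G)=1/\#G$ rather than any other multiplicative invariant. Everything afterward is routine bookkeeping — decomposing $\mathcal{G}$ into connected components and, in the countably infinite case, invoking the continuity axiom once to pass from finite partial sums to the full series.
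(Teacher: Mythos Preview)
Your proposal is correct and follows essentially the same route as the paper: the paper isolates the computation $\mu(\B G)=1/\#G$ as a lemma via the $\#G$-sheeted cover $\bE G\to\B G$, then passes to the skeleton (a disjoint union of $\B G$'s) and handles the finite and countably infinite cases exactly as you do, invoking additivity and then the filtration axiom. The only cosmetic difference is that you phrase the decomposition in terms of connected components rather than the skeleton, which amounts to the same thing.
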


We first prove the following statement.

\blem\label{LBG77}
Let $G$ be a finite group, and suppose $\mu:\emph{Ob}(\mathfrak{Gpd}_{\mathbb{T}})\to \R$ is a function satisfying items \ref{G1}, \ref{G2} and \ref{G3} of Theorem~\ref{ugc}. Then $\mu(\B G)=\frac{1}{\# G}$.
\elem

\bprf
Associated with $\B G$ is the $\# G$-sheeted cover $\bE G\to \B G$, where $\bE G$ is the action groupoid associated with the action of $G$ on itself. In particular, the objects of $\bE G$ are the elements of $G$, and given two objects $g,h\in G$, there exists a single morphism $g\to h\in \text{Hom}(g,h)$, which maps to the morphism $hg^{-1}$ under the functor $\bE G\to \B G$. It then follows that all objects in $\bE G$ are isomorphic, and moreover, all the automorphism groups in $\bE G$ are trivial. As such, $\bE G$ is equivalent to a groupoid with one object and one morphism, thus $\mu(\bE G)=1$. Moreover, given a morphism $g\in \B G$ and an object $h$ in $\bE G$, the morphism $h\to gh$ is the unique morphism mapping to $g$ by the functor $\bE G\to \B G$, thus $\bE G\to \B G$ is a $\# G$-sheeted covering map. It then follows that $\mu(\bE G)=\# G\mu(\B G)$, and since $\mu(\bE G)=1$, we have $\mu(\B G)=\frac{1}{\# G}$, as desired.
\eprf

\bprf[Proof of Theorem~\ref{ugc}]
Let $\mathcal{G}$ be a tame groupoid. Since every groupoid is equivalent to its skeleton, by the tameness of $\mathcal{G}$ we may assume its skeleton $\mathbb{G}$ is a countable disjoint union $\mathbb{G}=\coprod_{i\in I}\star_i$, with $\star_i$ a groupoid with one object $x_i$ for all $i\in I$. Item \ref{GC1} of Proposition~\ref{T1} then yields
\begin{equation}\label{e1}
\chi(\mathcal{G})=\chi(\mathbb{G})=\sum_{i\in I} \frac{1}{\#\text{Aut}(x_i)}
\end{equation} 
If the index set $I$ is in fact finite, we then have
\begin{equation}\label{e2}
\mu(\mathcal{G})\overset{(\text{item} \hspace{1mm} \ref{G1})}=\mu(\mathbb{G})\overset{(\text{item} \hspace{1mm} \ref{G4})}=\sum_{i\in I}\chi(\star_i)\overset{(\text{Lemma}~\ref{LBG77})}=\sum_{i\in I} \frac{1}{\# \text{Aut}(x_i)}\overset{\eqref{e1}}=\chi(\mathcal{G}),
\end{equation}
as desired. 

Now suppose the index set $I$ is countably infinite, and choose a bijection between $I$ and $\N$.  It then follows that $\mathbb{G}=\coprod_{n=0}^{\infty}\star_n$, so that $\mathbb{G}^{(n)}=\coprod_{j=0}^n\star_j$ is a filtration of $\mathbb{G}$. We then have
\[
\mu(\mathbb{G}^{(n)})\overset{(\text{item}~\ref{G4})}=\sum_{j=0}^n\mu(\star_j)\overset{(\text{Lemma}~\ref{LBG77})}=\sum_{j=0}^n\frac{1}{\# \text{Aut}(x_j)}=\sum_{[x]\in [\mathbb{G}^{(n)}]} \frac{1}{\#\text{Aut}(x)}=\chi(\mathbb{G}^{(n)}),
\]
thus
\[
\mu(\mathcal{G})\overset{(\text{item} \hspace{1mm} \ref{G1})}=\mu(\mathbb{G})\overset{(\text{item} \hspace{1mm} \ref{G5})}=\lim_{n\to \infty}\mu(\mathbb{G}^{(n)})=\lim_{n\to \infty}\chi(\mathbb{G}^{(n)})=\chi(\mathbb{G})=\chi(\mathcal{G}),
\]
as desired.
\eprf

\section{Generating series of stuff types}\label{SEC3}

In the same paper in which they introduced groupoid cardinality \cite{BAEZDOLAN}, Baez and Dolan introduced \emph{stuff types} with a view towards categorifying the Fock space associated with the quantum harmonic oscillator. Stuff types are essentially generalized combinatorial species, and for a certain class of stuff types -- which we refer to as \emph{relatively tame} -- one may associate with each stuff type a formal power series which may be viewed as a decategorification of the stuff type. In Section~\ref{ATG}, we use power series associated with stuff types to define \emph{analytic groupoid cardinality}, which is a complex-valued extension of groupoid cardinality to a certain class of relatively tame groupoids.

\bd
Let $\mathfrak{Fin}$ be the groupoid of bijections between finite sets. A \define{stuff type} is a groupoid $\mathcal{G}$ endowed with a functor $\mathcal{G}\to \mathfrak{Fin}$. The $n$th \define{graded component} of a stuff type $\mathcal{G}$ is the groupoid $\mathcal{G}_n$ whose objects consist of the objects of $\mathcal{G}$ which map to $n$-element sets under the functor $\mathcal{G}\to \mathfrak{Fin}$, together with all the associated hom-sets in $\mathcal{G}$. A stuff type $\mathcal{G}$ then admits the structure of the coproduct 
\[
\mathcal{G}=\coprod_{n=0}^{\infty}\mathcal{G}_n
\]
and $\mathcal{G}^{(n)}=\coprod_{i=0}^{n}\mathcal{G}_i$ is said to be the \define{canonical filtration} of $\mathcal{G}$.
\ed

\bd
Let $\mathcal{G}$ be a stuff type. If $\mathcal{G}_d$ is non-empty for some $d\in \N$ while $\mathcal{G}_n$ is the empty groupoid for $n\neq d$, then $\mathcal{G}$ is said to be of \define{degree} $d$. As such, if $\mathcal{G}_n$ is non-empty then $\mathcal{G}_n$ is necessarily a stuff type of degree $n$. An object of $\mathcal{G}_n$ will be referred to as an \define{object of degree $n$}. Objects of degree 0 and 1 will be referred to as \define{null objects} and \define{pointed objects}, respectively.
\ed

\bx
Let $k\in \N$, and let $Z^k\to \mathfrak{Fin}$ be the stuff type which takes a totally ordered, $k$-element set to its underlying set (the notation $Z$ here is not to confused with the standard notation for the integers, namely $\mathbb{Z}$). Then $Z^k$ is of degree $k$ for all $k\in \N$.
\ex

\bx
Let $\mathcal{G}$ be the groupoid of labeled simple graphs, with morphisms corresponding to relabeling of vertices. Then $\mathcal{G}$ admits the structure of a stuff type by sending a labeled simple graph to its set of vertices, and sending a morphism to the corresponding bijection between the vertices. In such a case, $\mathcal{G}_n$ is the groupoid of labeled simple graphs on $n$ vertices, while $[\mathcal{G}_n]$ may be identified with the set of \emph{unlabeled} simple graphs on $n$ vertices.
\ex

\bx
Let $\mathcal{G}$ be a tame groupoid, and let $\bold{X}_{\mathcal{G}}:\mathcal{G}\to \mathfrak{Fin}$ be the functor defined as follows.
On objects, $\bold{X}_{\mathcal{G}}(x)=\underline{\text{Aut}(x)}$ for all objects $x$ in $\mathcal{G}$, where $\underline{\text{Aut}(x)}$ denotes the underlying set of $\text{Aut}(x)$ (which is finite since $\mathcal{G}$ is tame). Given objects $x$ and $y$ of $\mathcal{G}$ and a morphism $f\in \text{Hom}(x,y)$, the morphism $\bold{X}_{\mathcal{G}}(f):\underline{\text{Aut}(x)}\to \underline{\text{Aut}(y)}$ is the bijection given by
\[
g\mapsto f\circ g \circ f^{-1} \in \underline{\text{Aut}(y)}
\]
It follows directly from the definitions that $\bold{X}_{\mathcal{G}}$ is indeed a functor, so that every tame groupoid admits a stuff type structure. 
\ex

We now show how stuff types may be viewed as a categorification of formal power series.

\bd
The \define{category of stuff types} is the category $\mathfrak{Fin}[Z]$ whose objects are stuff types, and given stuff types $\bf{X}:\mathcal{G}\to \mathfrak{Fin}$ and $\bf{Y}:\mathcal{H}\to \mathfrak{Fin}$, a morphism from $\bf{X}$ to $\bf{Y}$ consists of a functor $T:\mathcal{G}\to \mathcal{H}$ such that $\bf{X}$ is naturally isomorphic $\bold{Y}\circ T$. In particular, a morphism between $\bf{X}$ and $\bf{Y}$ requires the datum of a functor $T:\mathcal{G}\to \mathcal{H}$ and morphisms $\tau_x:\bold{X}(x)\to \bold{Y}(T(x))$ for all $x\in \text{Ob}(\mathcal{G})$, such that for every morphism $f:x\to y \in \text{Mor}(\mathcal{G})$ we have the following commutative diagram in $\mathfrak{Fin}$.
\begin{equation}\label{ntd}
\xymatrix{
\bold{X}(x) \ar[d]_{\bold{X}(f)} \ar[r]^{\tau_x} & \bold{Y}(T(x)) \ar[d]^{\bold{Y}(T(f))} \\
\bold{X}(y) \ar[r]_{\tau_y} & \bold{Y}(T(y)) \\
}
\end{equation}
Two stuff types $\bf{X}:\mathcal{G}\to \mathfrak{Fin}$ and $\bf{Y}:\mathcal{H}\to \mathfrak{Fin}$ are said to be \define{equivalent} if and only if there exists a morphism $\bold{X}\to \bold{Y}$ such that the underlying functor $T:\mathcal{G}\to \mathcal{H}$ is an equivalence of categories. 
\ed

\begin{proposition}\label{p77}
If the stuff types $\bold{X}:\mathcal{G}\to \mathfrak{Fin}$ and $\bold{Y}:\mathcal{H}\to \mathfrak{Fin}$ are equivalent, then $\mathcal{G}_n$ is equivalent to $\mathcal{H}_n$ for all $n\geq 0$.
\end{proposition}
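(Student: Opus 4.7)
The plan is to check that the underlying equivalence $T\colon\mathcal{G}\to\mathcal{H}$ of the given stuff-type equivalence restricts to an equivalence of groupoids on each graded component. First I would unpack the definitions: an equivalence of stuff types $\mathbf{X}\to\mathbf{Y}$ provides a functor $T\colon\mathcal{G}\to\mathcal{H}$ that is an equivalence of categories, together with a natural isomorphism $\tau\colon\mathbf{X}\Rightarrow\mathbf{Y}\circ T$ whose components $\tau_x\colon\mathbf{X}(x)\to\mathbf{Y}(T(x))$ live in $\mathfrak{Fin}$. The key observation to exploit is that every morphism of $\mathfrak{Fin}$ is a bijection of finite sets, so each $\tau_x$ is automatically cardinality-preserving.

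From this, I would first argue that $T$ sends $\mathcal{G}_n$ to $\mathcal{H}_n$ on objects: if $x\in\mathcal{G}_n$, then by definition $|\mathbf{X}(x)|=n$, and the bijection $\tau_x$ forces $|\mathbf{Y}(T(x))|=n$, so $T(x)\in\mathcal{H}_n$. Since $\mathcal{G}_n$ and $\mathcal{H}_n$ are full subgroupoids of $\mathcal{G}$ and $\mathcal{H}$ (they inherit all hom-sets between degree-$n$ objects), the induced functor $T_n\colon\mathcal{G}_n\to\mathcal{H}_n$ is fully faithful whenever $T$ is.

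To close the argument, I need essential surjectivity of $T_n$. Given $h\in\mathcal{H}_n$, essential surjectivity of $T$ supplies $g\in\mathcal{G}$ with an isomorphism $\alpha\colon T(g)\to h$ in $\mathcal{H}$; applying $\mathbf{Y}$ gives a bijection $\mathbf{Y}(T(g))\to\mathbf{Y}(h)$, which combined with $\tau_g^{-1}$ yields a bijection $\mathbf{X}(g)\to\mathbf{Y}(h)$, so $|\mathbf{X}(g)|=n$ and $g\in\mathcal{G}_n$. Hence $T_n$ is essentially surjective, and therefore an equivalence of groupoids. There is no real obstacle here; the whole point is simply that the grading on a stuff type is transported through any stuff-type equivalence purely by the cardinality-preservation that is built into $\mathfrak{Fin}$, so the natural isomorphism $\tau$ automatically respects degrees on both sides.
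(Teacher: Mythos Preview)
Your argument is correct and follows the same route as the paper: use the natural isomorphism $\tau$ (whose components are bijections in $\mathfrak{Fin}$) to see that $T$ preserves degree, and then conclude that the restriction $T|_{\mathcal{G}_n}$ is an equivalence. In fact you are more thorough than the paper, which simply asserts that $T|_{\mathcal{G}_n}$ is an equivalence once degree-preservation is noted, whereas you explicitly verify full faithfulness (via fullness of the graded components) and essential surjectivity (by pulling back the degree along $\mathbf{Y}(\alpha)$ and $\tau_g$).
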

\begin{proof}
Suppose $(T,\tau):\bold{X}\to \bold{Y}$ is a morphism between stuff types $\bold{X}:\mathcal{G}\to \mathfrak{Fin}$ and $\bold{Y}:\mathcal{H}\to \mathfrak{Fin}$ such that $T:\mathcal{G}\to \mathcal{H}$ is an equivalence of categories. By diagram \eqref{ntd} $T$ necessarily sends objects of $\mathcal{G}_n$ to objects of $\mathcal{H}_n$, thus $\left.T\right|_{\mathcal{G}_n}:\mathcal{G}_n\to \mathcal{H}_n$ is an equivalence.
\end{proof}

\bd
Let $\bold{X}:\mathcal{G}\to \mathfrak{Fin}$ and $\bold{Y}:\mathcal{H}\to \mathfrak{Fin}$ be stuff types. A morphism $(T,\tau):\bold{X}\to \bold{Y}$ is said to be a $k$-\define{sheeted covering} if and only if the functor $T:\mathcal{G}\to \mathcal{H}$ is a $k$-sheeted covering.
\ed

We now recall the monoidal operations of addition and partitional product in $\mathfrak{Fin}[Z]$. 

\bd
Let $\bold{X}:\mathcal{G}\to \mathfrak{Fin}$ and $\bold{Y}:\mathcal{H}\to \mathfrak{Fin}$ be two objects in $\mathfrak{Fin}[Z]$. 
\begin{enumerate}[i.]
\item
\underline{Addition}: The \define{sum} of $\bold{X}$ and $\bold{Y}$ is the stuff type $\bold{X}+\bold{Y}:\mathcal{G}\coprod \mathcal{H}\to \mathfrak{Fin}$, with $(\bold{X}+\bold{Y})(x)=\bold{X}(x)$ if $x\in \text{Ob}(\mathcal{G})$ and $(\bold{X}+\bold{Y})(y)=\bold{Y}(y)$ if $y\in \text{Ob}(\mathcal{H})$ (and similarly for the morphisms). The map $(\bold{X},\bold{Y})\mapsto \bold{X}+\bold{Y}$ will be referred to as \define{addition}, and the additive identity is the unique functor $\bold{0}$ from the empty groupoid to $\mathfrak{Fin}$. 
\item
\underline{Partitional Product}: The \define{partitional product} (or \define{Cauchy product}) of $\bold{X}$ and $\bold{Y}$ is the stuff type $\bold{X}\cdot \bold{Y}:\mathcal{G}\times \mathcal{H}\to \mathfrak{Fin}$, where $(\bold{X}\cdot \bold{Y})(x,y)=\bold{X}(x)\sqcup \bold{Y}(y)$. As for the morphisms, if $(x\to x',y\to y')$ is a morphism in $\mathcal{G}\times \mathcal{H}$, then $(\bold{X}\cdot \bold{Y})(x\to x',y\to y')$ gets sent to the induced bijection $\bold{X}(x)\sqcup \bold{Y}(y)\to \bold{X}(x')\sqcup \bold{Y}(y')$. The multiplicative identity with respect to the partitional product is unique up to natural isomorphism, which is necessarily of the form $\mathbbm{1}:\circ \to \mathfrak{Fin}$, where $\circ$ is a groupoid with a single null object $\circ$ and single morphism, $\mathbbm{1}(\circ)=\varnothing$ and $\mathbbm{1}(\circ\to \circ)=\varnothing\to \varnothing$.
\end{enumerate}
\ed

\br
Addition and partitional products of stuff types are commutative up to natural isomorphism, and while the addition of stuff types is the categorical coproduct in $\mathfrak{Fin}[Z]$, the partitional product is \emph{not} the categorical product in $\mathfrak{Fin}[Z]$. If $\bold{X}:\mathcal{G}\to \mathfrak{Fin}$ and $\bold{Y}:\mathcal{H}\to \mathfrak{Fin}$ are objects in $\mathfrak{Fin}[Z]$, we will often denote $\bold{X}+\bold{Y}$ and $\bold{X}\cdot \bold{Y}$ by $\mathcal{G}+\mathcal{H}$ and $\mathcal{G}\cdot \mathcal{H}$ respectively, with the overlying functors being implicit. 
\er

\begin{proposition}\label{p19}
Let $\mathcal{G}, \mathcal{H} \in \emph{Ob}(\mathfrak{Fin}[Z])$. Then for all $n\geq 0$ we have
\begin{enumerate}[i.]
\item\label{COEFF1}
$(\mathcal{G}+\mathcal{H})_n=\mathcal{G}_n\coprod\mathcal{H}_n$
\item\label{COEFF3}
$(\mathcal{G}\cdot \mathcal{H})_n=\coprod_{i+j=n}\mathcal{G}_i\times \mathcal{H}_j$
\end{enumerate}
\end{proposition}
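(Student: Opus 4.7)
The plan is to prove both parts by directly unwinding the definitions of addition and partitional product in $\mathfrak{Fin}[Z]$ and then reading off which objects lie in the degree-$n$ graded component. The key observation that makes the whole argument routine is that any functor to $\mathfrak{Fin}$ sends morphisms to bijections, so morphisms in a stuff type necessarily preserve the degree of their source and target; consequently, any full subgroupoid cut out by a condition on degree automatically decomposes as a disjoint union of full subgroupoids of fixed degree.

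For (i), I would start by recalling that the underlying groupoid of $\mathcal{G}+\mathcal{H}$ is $\mathcal{G}\coprod\mathcal{H}$ and that the functor $\bold{X}+\bold{Y}$ restricts to $\bold{X}$ on $\mathcal{G}$ and to $\bold{Y}$ on $\mathcal{H}$. The full subgroupoid of $\mathcal{G}\coprod\mathcal{H}$ on objects sent to $n$-element sets is then precisely the objects of $\mathcal{G}$ of degree $n$ together with the objects of $\mathcal{H}$ of degree $n$, and since there are no morphisms between the two summands of a coproduct of groupoids, the resulting full subgroupoid is literally $\mathcal{G}_n\coprod\mathcal{H}_n$.

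For (ii), the underlying groupoid of $\mathcal{G}\cdot\mathcal{H}$ is $\mathcal{G}\times\mathcal{H}$ and the functor sends $(x,y)$ to $\bold{X}(x)\sqcup \bold{Y}(y)$. An object $(x,y)$ is therefore of degree $n$ if and only if $|\bold{X}(x)|+|\bold{Y}(y)|=n$, which happens if and only if $x\in \mathcal{G}_i$ and $y\in \mathcal{H}_j$ for a unique pair $(i,j)$ with $i+j=n$. Indexing by this pair partitions the objects of $(\mathcal{G}\cdot\mathcal{H})_n$ into the products $\mathcal{G}_i\times\mathcal{H}_j$, and the observation from the first paragraph guarantees that a morphism $(x,y)\to(x',y')$ in $\mathcal{G}\times\mathcal{H}$ can exist only when the two pairs have matching bidegree, so the partition of objects extends to a partition of morphisms and yields the decomposition $(\mathcal{G}\cdot\mathcal{H})_n=\coprod_{i+j=n}\mathcal{G}_i\times\mathcal{H}_j$.

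I do not anticipate any real obstacle: both identities are categorifications of the familiar formulas for the coefficients of a sum and a Cauchy product of formal power series, and the verification is purely a matter of matching definitions, with the degree-preserving property of $\mathfrak{Fin}$-valued functors as the only nontrivial ingredient used to ensure the decomposition is valid on morphisms as well as objects.
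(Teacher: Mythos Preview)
Your proposal is correct and is essentially the same approach as the paper, which simply states that ``the proposition follows directly from the definitions of addition and partitional product.'' You have merely spelled out the details that the paper leaves implicit, including the observation that morphisms in $\mathfrak{Fin}$ are bijections and hence morphisms in a stuff type preserve degree.
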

\begin{proof}
The proposition follows directly from the definitions of addition and partitional product. 
\end{proof}

\bd
A stuff type $\mathcal{G}\to \mathfrak{Fin}$ will be referred to as \define{relatively tame} if and only if $\mathcal{G}_n$ is tame for all $n\in \N$.
\ed

\br
If $\mathcal{G}$ is tame, then $\bold{X}:\mathcal{G}\to \mathfrak{Fin}$ is relatively tame for all $\bold{X}$.
\er

\bd
The \define{generating series} of a relatively tame stuff type $\mathcal{G}\to \mathfrak{Fin}$ is the formal power series $\mathcal{G}(z)\in \R[[z]]$ given by
\[
\mathcal{G}(z)=\sum_{n=0}^{\infty}\chi(\mathcal{G}_n)z^n
\] 
\ed

\bx\label{E971}
Suppose $\bold{X}:\mathcal{G}\to \mathfrak{Fin}$ is a relatively tame stuff type with $\bold{X}$ faithful, let $[n]$ denote an $n$-element set for all $n\geq 0$, and let $\bold{X}^{-1}([n])$ denote the set of objects which map to the set $[n]$ under $\bold{X}$. Then $\mathcal{G}_n$ is equivalent to the action groupoid $\bold{X}^{-1}([n])/\!\!/ \mathcal{S}_n$ (see \cite{MORTON}, Remark~5), thus
\[
\mathcal{G}(z)=\sum_{n=1}^{\infty}\frac{\#\bold{X}^{-1}([n])}{n!}z^n
\] 
As such, $\mathcal{G}(z)$ coincides with the exponential generating function for $\mathcal{G}$-structures on finite sets.  
\ex

\bn\label{p17}
The generating series of a relatively tame stuff type satisfies the following properties.
\begin{enumerate}[i.]
\item\label{gs0}
If $\mathcal{G}$ is equivalent to  $\mathcal{H}$, then $\mathcal{G}(z)=\mathcal{H}(z)$.
\item\label{gs2}
If $T:\mathcal{G}\to \mathcal{H}$ is a $k$-sheeted covering between relatively tame stuff types, then $\mathcal{G}(z)=k\mathcal{H}(z)$.
\item\label{gs1}
$(\mathcal{G}+ \mathcal{H})(z)=\mathcal{G}(z)+\mathcal{H}(z)$ \hspace{2mm} for all relatively tame stuff types $\mathcal{G}$ and $\mathcal{H}$.
\item\label{gs3}
$(\mathcal{G}\cdot \mathcal{H})(z)=\mathcal{G}(z)\cdot \mathcal{H}(z)$ \hspace{2mm} for all relatively tame stuff types $\mathcal{G}$ and $\mathcal{H}$, where $\mathcal{G}(z)\cdot \mathcal{H}(z)$ denotes the usual Cauchy product of power series.
\item\label{gs4}
If  $\mathcal{G}_n$ is a stuff type of degree $n$ with one object and one morphism, then $\mathcal{G}_n(z)=z^n$.
\end{enumerate}
\en

\bprf
If $\mathcal{G}$ and $\mathcal{H}$ are equivalent, then by Proposition~\ref{p77} we have $\mathcal{G}_n$ is equivalent to $\mathcal{H}_n$ for all $n\geq 0$. It then follows by item \ref{GC1} of Proposition~\ref{T1} that $\chi(\mathcal{G}_n)=\chi(\mathcal{H}_n)$ for all $n\geq 0$, thus $\mathcal{G}_{\mu}(z)=\mathcal{H}_{\mu}(z)$. Since $T:\mathcal{G}\to \mathcal{H}$ is a $k$-sheeted covering we have $\mathcal{G}_n=k\mathcal{H}_n$ for all $n$, from which item \ref{gs2} follows. Items \ref{gs1} - \ref{gs3} follow from items \ref{COEFF1} and \ref{COEFF3} of Proposition~\ref{p19}. Item \ref{gs4} follows directly from the definition of generating series of a stuff type.
\eprf

\section{Analytically tame stuff types}\label{ATG}

We now define \emph{analytic groupoid cardinality}, which is a complex-valued extension of groupoid cardinality to a class of relatively tame stuff types which we refer to as \emph{analytically tame}. 

\bd
A relatively tame stuff type $\mathcal{G}$ will be referred to as \define{analytically tame} if and only if its generating series $\mathcal{G}(z)$, viewed as a function of the complex variable $z$, is analytic in a neighborhood of $z=0$ and admits a unique analytic continuation to  $z=1$. In such a case, denote the value at $z=1$ of the analytic continuation of $\mathcal{G}(z)$ to  $z=1$ by $\mathcal{G}_a(1)$. We then define the \define{analytic groupoid cardinality} $\chi_a(\mathcal{G})$ of an analytically tame stuff type $\mathcal{G}$ to be the complex number given by $\chi_a(\mathcal{G})=\mathcal{G}_a(1)$.
\ed

\bx\label{BRT17}
Let $\mathcal{B}$ be the stuff type of labeled, planar, binary rooted trees with morphisms corresponding to relabeling of vertices, and let $F:\mathcal{B}\to \mathfrak{Fin}$ be the functor which takes a binary rooted tree to its set of vertices and takes a morphism to the underlying bijection between the vertices. Then $F$ is faithful, so by Example~\ref{E971} we have
\[
\mathcal{B}(z)=\sum_{n=0}^{\infty}c_nz^n,
\]
where $c_n=\frac{1}{n+1}{2n\choose n}$ is the $n$th Catalan number. The power series $\mathcal{B}(z)$ is analytic in a neighborhood of $z=0$, and admits a unique analytic continuation to $z=1$ given by
\[
\mathcal{B}(z)=\frac{1-\sqrt{1-4z}}{2z}
\]
It then follows that $\mathcal{B}$ is analytically tame, and $\chi_a(\mathcal{B})=\frac{1}{2}-\frac{\sqrt{3}}{2}i$.
\ex

\bx\label{BSZ19}
Let $\mathcal{G}$ be the stuff type corresponding to the structure of being a binary string with no consecutive zeros. An object of $\mathcal{G}$ is a triple $(S,\mathscr{O}_S,\tau)$, where $S$ is a finite set, $\mathscr{O}_S:S\to \{1,...,n\}$ is an ordering of $S$ and $\tau:\{1,...,n\}\to \{0,1\}$ is a binary string with no consecutive zeros. A morphism in this category only exists between objects of the form $(S,\mathscr{O}_S,\tau)$ and $(T,\mathscr{O}_T,\tau)$ (so $\tau$ must be the same for both objects), and consists of a bijection $f:S\to T$ such that $\tau\circ \mathscr{O}_S=\tau\circ \mathscr{O}_T\circ f$, as in the following commutative diagram.
\[
\xymatrix{
S \ar[rr]^{f} \ar[dr]_{\mathscr{O}_S} & & T \ar[dl]^{\mathscr{O}_T} \\
& \{1,...,n\} \ar[d]_{\tau}&  \\
&\{0,1\} & \\
}
\]
Since a binary string with no consecutive zeros is either the empty string, $0$, or of the form $1\star b$ or $01\star b$ for some binary string $b$ with no repeated zeros ($\star$ denotes concatenation), it follows that 
\[
\mathcal{G}\cong \mathbbm{1}+Z+(Z+Z^2)\mathcal{G},
\]
where we recall $Z^k$ is the degree $k$ stuff type of being a totally ordered, $k$-element set for all $k\in \N$. We then have
\[
\mathcal{G}(z)=1+z+(z+z^2)\mathcal{G}(z),
\]
thus
\[
\mathcal{G}(z)=\frac{1+z}{1-(z+z^2)},
\]
which yields $\chi_a(\mathcal{G})=-2$.
\ex

\bd
The \define{category of analytically tame stuff types} is the category $\mathfrak{Fin}_a[Z]$ consisting of stuff type morphisms between analytically tame stuff types.
\ed

\bn\label{T171}
Analytic groupoid cardinality satisfies the following properties.
\begin{enumerate}[i.]
\item\label{AGC1}
If an analytically tame stuff type $\mathcal{G}$ is equivalent to $\mathcal{H}$, then $\mathcal{H}$ is analytically tame and $\chi_a(\mathcal{G})=\chi_a(\mathcal{H})$.
\item
If an analytically tame stuff type $\mathcal{G}$ is in fact tame, then $\chi_a(\mathcal{G})=\chi(\mathcal{G})$.
\item
If $\mathcal{G}\to \mathcal{H}$ is a $k$-sheeted covering map between analytically tame stuff types, then $\chi_a(\mathcal{G})=k\chi_a(\mathcal{H})$.
\item\label{AGC2}
$\chi_a(\mathcal{G}+ \mathcal{H})=\chi_a(\mathcal{G})+\chi_a(\mathcal{H})$ \hspace{2mm} for all analytically tame stuff types $\mathcal{G}$ and $\mathcal{H}$.
\item\label{AGC3}
$\chi_a(\mathcal{G}\cdot \mathcal{H})=\chi_a(\mathcal{G})\cdot \chi_a(\mathcal{H})$ \hspace{3.5mm} for all analytically tame stuff types $\mathcal{G}$ and $\mathcal{H}$.
\end{enumerate}
\en

\bprf
\begin{enumerate}[i.]
\item
By item \ref{gs0} of Proposition~\ref{p17} $\mathcal{G}(z)=\mathcal{H}(z)$, and since $\mathcal{G}$ is analytically tame it follows that $\mathcal{H}$ is analytically tame as well. Moreover, since $\mathcal{G}(z)=\mathcal{H}(z)$ we have $\chi_a(\mathcal{G})=\chi_a(\mathcal{H})$.
\item
Let $\mathcal{G}^{(n)}=\coprod_{i=0}^{n}\mathcal{G}_i$ be the canonical filtration of $\mathcal{G}$ associated with the functor $\mathcal{G}\to \mathfrak{Fin}$ which endows $\mathcal{G}$ with the structure of a stuff type. Since $\mathcal{G}$ is tame, we have
\[
\chi(\mathcal{G})=\lim_{n\to \infty}\chi(\mathcal{G}^{(n)})=\lim_{n\to \infty}\sum_{i=0}^{n}\chi(\mathcal{G}_n)=\mathcal{G}(1),
\]
where the first and second equalities follow from items \ref{G4} and \ref{G2} of Proposition~\ref{T1} respectively. And since $\mathcal{G}(1)$ is necessarily the unique analytic continuation of $\mathcal{G}(z)$ to $z=1$, it follows that $\chi(\mathcal{G})=\chi_a(\mathcal{G})$.
\item
The statement follows directly from item \ref{gs2} of Proposition~\ref{p17}.
\item
Since $\mathcal{G}(z)$ and $\mathcal{H}(z)$ admit unique analytic continuations to $z=1$, it follows that $\mathcal{G}(z)$ and $\mathcal{H}(z)$ may be analytically continued to $z=1$ along a common domain, thus by item \ref{gs1} of Proposition~\ref{p17} $(\mathcal{G}+ \mathcal{H})(z)$ admits an analytic continuation to $z=1$ given by $\mathcal{G}_a(1)+\mathcal{H}_a(1)$. Moreover, given an analytic continuation of $(\mathcal{G}+ \mathcal{H})(z)$ along a path from $z=0$ to $z=1$, one may continue both $\mathcal{G}(z)$ and $\mathcal{H}(z)$ along a homologous path, and as such, the continuation of $(\mathcal{G}+ \mathcal{H})(z)$ along a such a path must necessarily coincide with $\mathcal{G}(z)+\mathcal{H}(z)$ continued along a homologous path, thus $(\mathcal{G}+ \mathcal{H})(z)$ admits a unique analytic continuation to $z=1$ given by $\mathcal{G}_a(1)+\mathcal{H}_a(1)$. We then have $\chi_a(\mathcal{G}+ \mathcal{H})=\chi_a(\mathcal{G})+\chi_a(\mathcal{H})$, as desired.
\item
The proof is similar to that of the proof of item \ref{AGC2}.
\qedhere
\end{enumerate}
\eprf

\section{Analytic groupoid cardinality and nested equivalence}\label{AGCNE}

We now address the issue of the meaning of analytic groupoid cardinality, and what it may tell us about the structure of an analytically tame stuff type. In particular, we introduce the notion of `nested equivalence', which is an equivalence between stuff types which reflects a sort of structural recursion within a stuff type. The prototypical example of nested equivalence is that of the stuff type of binary trees, as a binary tree may be decomposed recursively into two binary trees joined together at a common vertex. The existence of a nested equivalence induces a set bijection between the associated isomorphism classes, whose codomain is necessarily a polynomial in the domain (in the context of type theory such bijections were referred to as `generic recursive polynomial types' in \cite{Fiore1}). We then refer to such a polynomial as the `structural polynomial' associated with the nested equivalence, and then show that in such a case the analytic groupoid cardinality of a stuff type which admits such a nested equivalence is a fixed point of the structural polynomial associated with the equivalence. As such, for analytically tame stuff types which admit nested equivalence, one may view analytic groupoid cardinality as a numerical avatar of structural recursion within the isomorphism classes of the stuff type.

\bd
Let $p(z)=a_0+a_1z+\cdots+a_kz^k\in \N[z]$. 
\begin{enumerate}[i.]
\item
The stuff type $p(Z)$ is given by $p(Z)=a_0\mathbbm{1}+a_1Z+\cdots+a_kZ^k$, where we recall $Z^k$ is the stuff-type of being a totally ordered, $k$-element set.
\item
If $S$ is a set, then $p(S)$ is set given by $p(S)=a_0\{\bullet\}+a_1S+\cdots+a_kS^k$, where $+$ denotes disjoint union. 
\end{enumerate}
\ed

In Example~\ref{BSZ19} we saw that the analytically tame stuff type $\mathcal{G}$ of being a binary string with no consecutive zeros admits an equivalence of the form  
\[
\mathcal{G}\cong \mathbbm{1}+Z+(Z+Z^2)\mathcal{G},
\]
which motivates the following definition.

\bd\label{NXEz17}
Let $\mathcal{G}$ be a stuff type. A \define{nested equivalence} is an equivalence of the form
\be\label{NXSp999}
\mathcal{G}\cong p_0(Z)+p_1(Z)\mathcal{G}+\cdots +p_m(Z)\mathcal{G}^m,
\ee
where $p_0(z),...,p_m(z)\in \Z_2[z]$, with $p_m$ not equal to the zero polynomial.
\ed

\br
Replacing the coefficients $p_0(Z),...,p_m(Z)\in \mathfrak{Fin}_a[Z]$ in \eqref{NXSp999} of Definition~\ref{NXEz17} with arbitrary stuff types $A_0,...,A_n \in \mathfrak{Fin}_a[Z]$ such that $\chi_a(A_i)\in \N$ would work just as fine for our purposes. However, we work with polynomials in $Z$ for added clarity.
\er

If $F:\mathcal{G}\to \mathcal{H}$ is an equivalence between discrete groupoids, then the function $[F]:[\mathcal{G}]\to [\mathcal{H}]$ given by $[F]([x])=[F(x)]$ is immediately seen to be a bijection. In the case of nested equivalence, i.e., when $\mathcal{H}=p_0(Z)+p_1(Z)\mathcal{G}+\cdots +p_m(Z)\mathcal{G}^m$, the associated bijection is of the form $[F]:[\mathcal{G}]\to p_0(1)+p_1(1)[\mathcal{G}]+\cdots +p_m(1)[\mathcal{G}]^m$. Such bijections are referred to as `generic recursive polynomial type' in \cite{Fiore1}, and will play a crucial role moving forward. 


\bd
Let $\mathcal{G}$ be a stuff type, and suppose $F:\mathcal{G}\to p_0(Z)+p_1(Z)\mathcal{G}+\cdots +p_m(Z)\mathcal{G}^m$ is a nested equivalence. The associated bijection $[F]:[\mathcal{G}]\to p_0(1)+p_1(1)[\mathcal{G}]+\cdots +p_m(1)[\mathcal{G}]^m$ given by $[F]([x])=[F(x)]$ will be referred to as the \define{structural bijection} associated with $F$, and the polynomial $P_F(z)=p_0(1)+p_1(1)z+\cdots +p_m(1)z^m\in \Z[z]$ will be referred to as the \define{structural polynomial} associated with $F$.
\ed

\bt
Let $\mathcal{G}$ be an analytically tame stuff type, and suppose $F:\mathcal{G}\to p_0(Z)+p_1(Z)\mathcal{G}+\cdots +p_m(Z)\mathcal{G}^m$ is a nested equivalence. Then $\chi_a(\mathcal{G})$ is a fixed point of the structural polynomial $P_F(z)$, i.e., $\chi_a(\mathcal{G})=P_F(\chi_a(\mathcal{G}))$.
\et

\bprf
Since $F:\mathcal{G}\to p_0(Z)+p_1(Z)\mathcal{G}+\cdots +p_m(Z)\mathcal{G}^m$ is an equivalence, Proposition~\ref{T171} yields
\begin{eqnarray*}
\chi_a(\mathcal{G})&=&\chi_a\left(p_0(Z)+p_1(Z)\mathcal{G}+\cdots +p_m(Z)\mathcal{G}^m\right) \\
&=&p_0(1)+p_1(1)\chi_a(\mathcal{G})+\cdots +p_m(1)\chi_a(\mathcal{G})^m \\
&=&P_F(\chi_a(\mathcal{G})),
\end{eqnarray*}
as desired.
\eprf

\bx
An object of the stuff type $\mathcal{B}$ of labeled, planar, binary rooted trees is either the empty tree, or an ordered pair of trees joined together at its root. It then follows that there exists a nested equivalence of the form 
\[
F:\mathcal{B}\to \mathbbm{1}+Z\mathcal{B}^2,
\]
whose associated structural polynomial is $P_F(z)=1+z^2$. In Example~\ref{BRT17}  we determined that $\chi_a(\mathcal{B})=\frac{1}{2}-\frac{\sqrt{3}}{2}i$, thus
\[
P_F(\chi_a(\mathcal{G}))=1+\left(\frac{1}{2}-\frac{\sqrt{3}}{2}i\right)^2=\frac{1}{2}-\frac{\sqrt{3}}{2}i=\chi_a(\mathcal{G}),
\]
as expected. The structural bijection $[\mathcal{B}]\to 1+[\mathcal{B}]^2$ is then given by
\[
\epsilon \longmapsto \epsilon, \quad T_l-\circ-T_r\longmapsto (T_l,T_r),
\]
where $\epsilon$ denotes the empty tree and $T_l-\circ-T_r$ denotes the non-empty tree with left and right subtrees $T_l$ and $T_r$ respectively.
\ex

\bx
In Example~\ref{BSZ19} we determined that the stuff type $\mathcal{G}$ corresponding to the structure of being a binary string with no consecutive zeros is such that $\chi_a(\mathcal{G})=-2$, which follows from the fact that $\mathcal{G}$ admits a nested equivalence of the form 
\be\label{NE1999}
F:\mathcal{G}\to \mathbbm{1}+Z+(Z+Z^2)\mathcal{G} \hspace{0.5mm} .
\ee
The associated structural polynomial is then $P_F(z)=2+2z$, for which $\chi_a(\mathcal{G})=-2$ is a fixed point. Note that in this case an isomorphism class $[g]\in [\mathcal{G}]$ is what we usually think of as a binary string, as binary strings are unlabeled structures. The structural bijection $[\mathcal{G}]\to 2+2[\mathcal{G}]$ associated with the nested equivalence \eqref{NE1999} is then given by 
\[
\epsilon \longmapsto \epsilon, \quad 0\longmapsto 0 , \quad 1\star b\longmapsto b, \quad 01\star b\longmapsto b,
\]
where $\epsilon$ denotes the empty string and $\star$ denotes concatenation.
\ex

\bx
Let $\mathcal{M}$ denote the groupoid of non-empty, labeled Motzkin trees. A Motzkin tree is a rooted, planar, unary- and/or binary-branching tree, so that each node has out-degree 0, 1 or 2. For binary-branching, one of the output nodes is associated with the left while the other is associated with the right, and for unary-branching the unique output node is not associated with a direction. The generating series for Motzkin trees then coincides with the generating function for the Motzkin numbers (multiplied by $z$), which is given by
\[
\mathcal{M}(z)=\frac{1-z-\sqrt{(1-z)^2-4z^2}}{2z}=z+z^2+2z^3+4z^4+9z^5+21z^6+51z^7+\cdots ,
\]
thus $\chi_a(\mathcal{M})=-i$. As the root of a non-empty Motzkin tree is either non-branching (so that the tree consists of a single node), unary-branching, or binary-branching, it follows that Motzkin trees admit a nested equivalence of the form \[
F:\mathcal{M}\to Z+Z\mathcal{M}+Z\mathcal{M}^2.
\]
The structural bijection $[\mathcal{M}]\to 1+[\mathcal{M}]+[\mathcal{M}]^2$ is then given by
\[
\circ \longmapsto \circ, \quad \circ - T\longmapsto T, \quad T_l-\circ-T_r\longmapsto (T_l,T_r),
\]
where $\circ$ denotes the root of a Motzkin tree, $\circ - T$ denotes a Motzkin tree with a unary branching root, and $T_l-\circ-T_r$ denotes a Motzkin tree with a binary branching root. The structural polynomial is $P_F(z)=1+z+z^2$, for which $\chi_a(\mathcal{M})=-i$ is a fixed point. It is interesting to note that if we add the empty tree to this structure the resulting groupoid cardinality is $1-i$, but a nested equivalence realizing this complex cardinality is not evident (if one even exists). We address this point more generally in Question~\ref{qxZ57}.
\ex

\bx
Let $\mathscr{O}$ denote the stuff type of totally ordered finite sets. As a totally ordered $n$-element set  for $n>0$ is its first element joined together with a totally ordered $n-1$-element set, it follows that there exists a nested equivalence of the form
\[
F:\mathscr{O}\to \mathbbm{1}+Z\mathscr{O},
\]
which yields $\mathscr{O}(z)=\frac{1}{1-z}$. Since $\mathscr{O}(z)$ has a pole at $z=1$, it follows that $\mathscr{O}$ is \emph{not} analytically tame. Moreover, the associated structural polynomial is $p_F(z)=1+z$, which has no fixed points, thus reflecting the fact that $\mathscr{O}$ is not analytically tame. 
\ex

\br
As totally ordered finite sets may be thought of as unary trees, there is nothing more "infinite" about the structure of totally ordered finite sets compared to that of binary trees, which have a non-infinite analytic groupoid cardinality of $\frac{1}{2}-\frac{\sqrt{3}}{2}i$. As such, perhaps it is useful to also consider $\mathbb{P}^1$-valued analytic groupoid cardinality, in order to incorporate simple examples such as totally ordered finite sets into the framework. 
\er

\br
Let $P_F(z)$ be the structural polynomial associated with a nested equivalence $F:\mathcal{G}\to p_0(Z)+p_1(Z)\mathcal{G}+\cdots +p_m(Z)\mathcal{G}^m$. If the degree of $P_F(z)$ is at least 2, then it follows from the work of Fiore and Leinster \cite{LEINSTEROCC} that if
\[
z=P_F(z)\implies q(z)=r(z)\in \N[z]
\]
(with $q(z)$ and $r(z)$ non-constant), then there exists a bijection $q([\mathcal{G}])\to r([\mathcal{G}])$ built up out of copies of the structural bijection $[\mathcal{G}]\to P_F([\mathcal{G}])$. In such a case it then follows from Proposition~\ref{T171} that $q(\chi_a(\mathcal{G}))=r(\chi_a(\mathcal{G}))$. 
\er

The previous remark leads naturally to the following:

\begin{question}\label{qxZ57}
Let $\mathcal{G}$ be an analytically tame groupoid, and suppose $p(z)\in \N[z]$ is a polynomial of degree at least 2 such that $p(\chi_a(\mathcal{G}))=\chi_a(\mathcal{G})$. Does this imply the existence of a nested equivalence $F:\mathcal{G}\to p_0(Z)+p_1(Z)\mathcal{G}+\cdots +p_m(Z)\mathcal{G}^m$ such that $P_F(z)=p(z)$?
\end{question}

\addcontentsline{toc}{section}{\numberline{}Bibliography}
\bibliographystyle{plain}
\bibliography{AGC3}

\end{document}